\newcommand\reallywidehat[1]{%
\savestack{\tmpbox}{\stretchto{%
  \scaleto{%
    \scalerel*[\widthof{\ensuremath{#1}}]{\kern-.6pt\bigwedge\kern-.6pt}%
    {\rule[-\textheight/2]{1ex}{\textheight}}%WIDTH-LIMITED BIG WEDGE
  }{\textheight}% 
}{0.5ex}}%
\stackon[1pt]{#1}{\tmpbox}%
}
\theoremstyle{definition}
\newtheorem{definition}{Definition}%Extra square-bracket argument achives that the numbering is the same as for definition (single uniform counter). 
\theoremstyle{theorem}
\newtheorem{proposition}[definition]{Proposition}
\newtheorem{lemma}[definition]{Lemma}
\newtheorem{theorem}[definition]{Theorem}
\numberwithin{equation}{section}
\numberwithin{definition}{section}
\theoremstyle{remark}
\newtheorem{example}[definition]{Example}
\def\lll{\mathsf{x}}
\def\QQ{\mathbb{Q}}
\def\PP{\mathbb{P}}
\def\MM{\mathcal{M}}
\def\ZZ{\mathbb{Z}}
 \def\supp{\mathrm{supp}}
\def\dd{\mathrm{d}}
\begin{document}
\title{Complete monotonicity of time-changed L\'evy processes  at first passage}

\author{Matija Vidmar}
\address{Department of Mathematics, Faculty of Mathematics and Physics, University of Ljubljana}
\email{matija.vidmar@fmf.uni-lj.si}

\begin{abstract}
We consider the class of (possibly killed) spectrally positive L\'evy process that have been time-changed by the inverse of an integral functional. Within this class  we characterize the family of those processes which satisfy the following property: as  functions of  point of issue, the Laplace transforms of their first-passage times downwards are completely monotone. A wide (dense, in a sense) subfamily of this family admits closed form expressions for said Laplace transforms.
\end{abstract}

\keywords{Spectrally positive L\'evy process, time-change, first passage, complete monotonicity, Laplace transform}

\thanks{Support from the Slovenian Research Agency  (project No. N1-0174) is acknowledged.} 

\subjclass[2020]{60G51} 
\date{\today}

\maketitle

\section{Introduction}
As can be gathered from the literature, but is apparently not hitherto pointed out, (i) possibly killed spectrally positive L\'evy processes (pk-spLp) \cite{kyprianou}, (ii) continuous-state branching processes  \cite{ma}  and (iii) self-similar Markov processes of the spectrally positive type on the real line \cite{pierre} all share the following common property: as  functions of  point of issue, the Laplace transforms of their first-passage times downwards (fptd) are completely monotone (cm). More precisely, if $X$ is any of the processes mentioned, equipped with the probabilities $(\PP_x)_{x\in I}$ (where $I=\mathbb{R}$ for (i) and (iii), while $I=[0,\infty)$ or $I=(0,\infty)$ for (ii) according as to whether $0$ is hit with positive probability or not) in the usual way, then 
\begin{equation}\label{eq:cm-1}
\text{for each $q\in (0,\infty)$ and $\lll\in I$ the map }(0,\infty)\ni x\mapsto \PP_{\lll+x}[e^{-q T_\lll^-}]\text{ is  cm};
\end{equation}
here $T_\lll^-$ is the first entrance time of $X$ into $[\lll,\infty)$ and the reader is referred to \cite[Definition~1.3]{bernstein} for a definition of cm (the map must be of the class $C^\infty$ with derivatives alternating in sign). (Explicit expressions for the expectations appearing in \eqref{eq:cm-1} shall be recalled in Examples~\ref{example:levy-process}-\ref{example:logs-ppsmp}. Except for (i) they are quite non-trivial.) By Bernstein's \cite[Theorem~1.4]{bernstein}  \& Tonelli's theorems, and since the class of cm functions is a convex cone closed under pointwise limits \cite[Corollary~1.6]{bernstein}, \eqref{eq:cm-1} is actually equivalent to asking that for all $\lll\in I$ and all bounded cm $f:(0,\infty)\to \mathbb{R}$ vanishing at infinity the map $(0,\infty)\ni x\mapsto \PP_{\lll+x}[f( T_\lll^-)]$ is cm.

By the well-known Lamperti transforms \cite[Sections~12.1 \&~13.3]{kyprianou} in  cases (ii) \& (iii), trivially for (i), the classes (i)-(iii) are also all instances of time-changes of pk-spLp (by inverses of integral functionals). We limit ourselves to these and investigate when precisely the indicated property \eqref{eq:cm-1} of cm at first passage holds true for them.

Roughly speaking, our results are as follows. If $X$ is got from a pk-spLp by driving along its sample paths with a velocity that is the function $A$ of its position, then \eqref{eq:cm-1} is equivalent to $\frac{1}{A}$ being cm (Theorem~\ref{theorem:time-changed-cm}). In such case $\frac{1}{A}$ is the Laplace transform of a unique measure $\gamma$ on the Borel sets of $[0,\infty)$; under an extra condition, which is met if $\gamma$  has a support that is bounded away from zero,  we get  closed form Laplace transforms of the fptd (Proposition~\ref{corollary:P-sufficient}\ref{sufficient:iii}) that generalize those of (iii) above. This appears noteworthy as such explicit tractable expressions are generally hard to come by. And even if the extra condition does not prevail, knowing a priori the cm nature of the Laplace transforms of the fptd is a substantial structural property and can be a strong aid in evaluating them. Indeed, because cm functions are just Laplace transforms of  their ``representing'' measures, immediately one has an ansatz for them. Moreover, there is, in complete generality, a family of convolutional equations  for these representing measures (Eq.~\eqref{eq:nu-q}).

Time-changed (pk-)spLp have been studied quite extensively in the branching literature. See e.g. \cite{f-l-z,li-zhou}, where they are called nonlinear continuous-state branching processes. We make more concrete the relevance of \cite{f-l-z,li-zhou} to fptd in the comments following Definition~\ref{definition:measures-mq}. We might also mention here \cite[esp. Proposition~2.1]{LI20192941} for the special case of polynomial branching processes, where, among other results, the mean of the fptd \cite[Corollary~1.7]{LI20192941} is handled.\label{page:references}

Apart from the literature quoted above, a paper somewhat related to the preceding is \cite{temporally-cm} wherein it was considered which functions $f$ of  a Markov process $Y$ with state space $J$ and probabilities $(\QQ_y)_{y\in J}$ render the map $(0,\infty)\ni t\mapsto  \QQ_y[f(Y_t)]$  cm for all $y\in J$ -- ``temporal'' cm. The dual problem of so-called completely excessive measures (which is a notion related to cm \cite[Definitions~1.3 and~2.1]{saad}) was studied in \cite{saad}. Property~\eqref{eq:cm-1} ---  ``spatial'' cm at first passage --- can also be asked of a general $I$-valued Markov process $X$ (not necessarily time-changed spectrally positive L\'evy), $I$ an interval of $\mathbb{R}$ unbounded above, but establishing when it holds true then appears fundamentally more involved, and we do not pursue it here (though we may point out that positive examples outside the time-changed L\'evy world certainly exist,  e.g. such is the case for continuous-state branching processes with (im)migration \cite{ma,vidmar2021continuousstate}).  The reason for the restriction to processes without negative jumps (by taking the negative of course we subsume also those without positive jumps), besides the one given above, is that this property is responsible for a considerable simplification of the first passage theory, which is a well-known phenomenon \cite{kkr,vidmar2021exit,landriault_li_zhang_2017,avram_li_li_2021}; most directly this is visible in the existence of so-called scale functions (we introduce these for the present context in  Eq.~\eqref{eq:scale-identity} below). 

A final point, which we should like to make, is that the cm property \eqref{eq:cm-1} has its natural analogue in the discrete space setting, when the state space is an interval of $\ZZ$ unbounded above: cm becomes that of a sequence on $I$ \cite[Eq.~(VIII.3.1)]{feller}, but other than that, the same fundamental ideas apply (and analogous examples are available, e.g. continuous-time  Bienaym\'e-Galton-Watson processes \cite{Avram2019} correspond to (ii) above). Since the discrete space platform is technically less involved  and one expects that everything goes through, mutatis mutandis, we leave a parallel treatment thereof to the interested party.

\section{Preliminaries}
Fix $I$, an interval of $\mathbb{R}$, unbounded above, and put $I^\circ:=I\backslash \{\inf I\}$ for its interior.

\subsection{Complete monotonicity}\label{subsection:cm} A continuous real-valued map $\Theta$, defined on $I^\circ$ or $I$,  shall be said to be cm if $(0,\infty)\ni x\mapsto \Theta(\lll+x)$ is cm for all $\lll\in I$. When so, then  by Bernstein's theorem and since finite Laplace transforms (on a neighborhood  of infinity) determine measures on $[0,\infty)$ uniquely \cite[Theorem~8.4]{bhattacharya}, $\Theta$ is the Laplace transform of a unique (we will say, ``representing'') measure $\rho$ on $[0,\infty)$: $\Theta(\theta)=\hat{\rho}(\theta)$ for $\theta$ from the domain of $\Theta$. Here, and throughout, for a possibly signed measure $\rho$ on   $\mathcal{B}_{[0,\infty)}$ ($\mathcal{B}_{[0,\infty)}$ stands for the Borel sets of $[0,\infty)$) its Laplace transform $\hat{\rho}$ is given by $\hat{\rho}(\theta):=\int e^{-\theta z}\rho(\dd z)$ for those $\theta\in \mathbb{R}$ for which the integral $\int e^{-\theta z}\rho(\dd z)$ is well-defined (but possibly infinite).

\subsection{General notation}\label{subsectino:general-notation}
We shall write $g\cdot \mu:=(A\mapsto \int_Ag\dd\mu)$ (resp. $\mu[g]:=\int g\dd\mu$) for the indefinite (resp. definite) integral of a numerical measurable map $g$ against the measure $\mu$. On the other hand $g_\star\mu:=(A\mapsto \mu(g\in A))$ is the push-forward of a measure $\mu$ along a measurable $g$. And $\mu\star\nu:=+_\star(\mu\times \nu)$ is the convolution of the measures $\mu$ and $\nu$. As is customary, given an expression $f(x)$ for $x\in R$ we often write $f(\cdot)$ for the map $f$ defined on $R$, whose value at $x\in R$ is $f(x)$, $R$ being understood from context. For  two functions  $f_1$ and $f_2$ with the same domain and taking values in $[0,\infty]$, $f_1\propto f_2$ is taken to mean existence of $c\in (0,\infty)$ such that $f_2=cf_1$.

\subsection{Time-changed pk-spLp}\label{subsection:time-changed}
We construct the time-changed pk-spLp, which shall be the object of our study. Below, intuitively, while in $I^\circ$, the process $X$ should be viewed as having been got from the pk-spLp $\xi$ by driving along its sample paths with a velocity that is the function $A$ of its position; exiting from $I^\circ$ the process $X$ is stopped at $\inf I$ or relegated to the cemetery $\infty$ according as to whether  $X$ limits to $\inf I$ or not. 

Formally, let $A:I^\circ\to(0,\infty)$ be locally bounded, locally bounded away from zero and Borel measurable, let $\psi:[0,\infty)\to \mathbb{R}$ be the Laplace exponent of a spLp (for emphasis: no killing, $\psi(0)=0$), and let $p\in [0,\infty)$ (killing parameter, which we prefer to keep separate from $\psi$). For $\psi$ we exclude subordinators (in particular the constant process), but not negative drifts and assume further 
\begin{enumerate}[(a)]
\item\label{fund:a} in case $\inf I=-\infty$, that $A$ is bounded on $(-\infty,\lll]$ for some $\lll\in \mathbb{R}$;
\item\label{fund:b} in case $\inf I>-\infty$, that  $\inf I\in I$ iff $\int^\infty_{\psi^{-1}(0)+1} \frac{\dd\lambda}{\lambda A(\inf I+ \frac{1}{\lambda})\psi(\lambda)}<\infty$.
\end{enumerate}
These are our  deterministic input data. We will comment on the relevance of \ref{fund:a}-\ref{fund:b} below. 

We proceed to specify the associated stochastic objects. Let then $\xi=(\xi_u)_{u\in [0,\eta)}$ be a pk-spLp with Laplace exponent $\psi-p$  under the complete probabilities $(\PP_x)_{x\in \mathbb{R}}$ [thus $e^{u(\psi(\lambda)-p)}=\PP_x[e^{-\lambda(\xi_u-\xi_0)};u<\eta]$ for $\{u,\lambda\}\subset [0,\infty)$, $x\in \mathbb{R}$]; we insist that $\xi$ is c\`adl\`ag and has no negative jumps, both of these with certainty (not just a.s.). Set $$\sigma:=\sigma^-_{\inf I}:=\inf \{u\in [0,\eta):\xi_{u-}\land \xi_u\leq \inf I\}$$ [$\inf\emptyset:=\infty$, $\xi_{0-}:=\xi_0$ on $\{\eta>0\}$] and $$F(v):=\int_0^v \frac{\dd u}{A(\xi_u)}\text{ for }v\in [0,\eta\land\sigma].$$  Then, if $\inf I>-\infty$, we have \cite[Proposition~3.5]{vidmar2021characterizations} that $\inf I\in I$ iff 
\begin{equation}\label{eq:F-condition}
\text{$F(\sigma)<\infty$ with positive $\PP_x$-probability on $\{\sigma<\eta\}$ for some $x\in I^\circ$}
\end{equation}
(it is due to \ref{fund:b}: basically it follows from \cite[Theorem~2.1]{li2020integral}) in which case $F(\sigma)<\infty$ a.s.-$\PP_x$ [and also with positive $\PP_x$-probability] on $\{\sigma<\eta\}$ for all $x\in I^\circ$.  Put further $\tau:=F^{-1}$ on $[0,F(\eta\land\sigma))$ and define 
$$
\text{$\zeta:=F(\eta)\mathbbm{1}_{\{\sigma=\infty\}}+\infty\mathbbm{1}_{\{\sigma<\eta\}}$
 or $\zeta:=F(\eta\land \sigma)\overset{\text{a.s.}}{=}F(\eta)$ according as to whether $\inf I\in I$ or not,}$$
 and then finally, for $t\in [0,\zeta)$,
  $$X_t:=
\begin{cases}
\xi_{\tau_t}& \text{ if }t<F(\eta\land\sigma)\\
\inf I&\text{ if } t\geq F(\eta\land\sigma)
\end{cases}.
$$ 
 
We shall work with the process  $X=(X_t)_{t\in [0,\zeta)}$ under the probablities $(\PP_x)_{x\in  I}$. Its law is fully determined by the pair $(\psi-p,A)$. The assumptions made on this pair serve to ensure \cite[Proposition~3.5(i)]{vidmar2021characterizations} that $X$ is well-defined and well-behaved  in the sense of \cite[Subsection~1.1]{vidmar2021characterizations}. In particular \ref{fund:b} serves to guarantee, via \eqref{eq:F-condition}, that $X$ hits all levels  from $I$ below its starting point with positive probability, while \ref{fund:a} is used to ensure that $\lim_{\zeta-}X$ exists in $I\cup \{\infty\}$ a.s. on $\{0<\zeta<\infty\}$ (see proof of \cite[Proposition~3.5]{vidmar2021characterizations} for details). The classes (i), (ii) and (iii) from the Introduction correspond  to  $A\equiv 1$, $A$ the identity on $(0,\infty)$ or $[0,\infty)$ as the case may be and to  $A$ being an (increasing) exponential function, respectively.

\subsection{Scale functions for fptd of time-changed pk-spLp}
For $\lll\in I$, let  $$T_\lll^-:=\inf \{t\in [0,\zeta):X_t\leq \lll\}$$ be the first passage time of $X$ below the level $\lll$.

As already mentioned above, and established in \cite[Proposition~3.5(iii)]{vidmar2021characterizations},
\begin{equation}\label{eq:positive-chances}
\text{$\PP_x(T_\lll^-<\zeta)>0$ for all $\lll\leq x$ from $I$};
\end{equation}
therefore \cite[Eq.~($q$) in Subsection~1.2]{vidmar2021characterizations}, for each $q\in [0,\infty)$, there exists a, unique up to a  multiplicative constant from $(0,\infty)$,  so-called scale function $\Phi_q:I\to (0,\infty)$ such that 
\begin{equation}\label{eq:scale-identity}
\PP_x[e^{-q T_\lll^-};T_\lll^-<\zeta]=\frac{\Phi_q(x)}{\Phi_q(\lll)}\text{ for all $\lll\leq x$ from $I$}.
\end{equation}
By  \cite[Proposition~3.5(iii)]{vidmar2021characterizations} $\Phi_0\propto e^{-\psi^{-1}(p)\cdot }$, where $\psi^{-1}:[0,\infty)\to [0,\infty)$ is the right-continuous inverse of $\psi$: $$\psi^{-1}(u):=\inf\{s\in [0,\infty):\psi(s)> u\},\quad  u\in [0,\infty).$$

Put next ($E^F$ stand for the set of functions mapping a set $F$ into a set $E$) $$\MM_{I}:=\left\{\nu\in[0,\infty]^{\mathcal{B}_{[0,\infty)}}:\nu\text{ a measure on $\mathcal{B}_{[0,\infty)}$ and }\hat{\nu}(\lll)<\infty\text{ for all }\lll\in I\right\}.$$ Because the scale functions are automatically continuous \cite[Theorem~2.2]{vidmar2021characterizations} we see that \eqref{eq:cm-1}, which by definition is the cm of the $\Phi_q$, $q\in (0,\infty)$, is equivalent to: 
\begin{equation}\label{cm-equiv}
\forall q\in (0,\infty)\, \exists \nu_q\in \MM_I\text{ such that } \Phi_q\propto \widehat{\nu_q}\vert_I,
\end{equation}
in which case the measures $\nu_q$, $q\in (0,\infty)$, are unique up to a multiplicative constant from $(0,\infty)$, they are non-zero and locally finite. By \cite[Propositions~3.1,~3.4 \&~3.5(ii)]{vidmar2021characterizations}, for a given $\nu\in \MM_I\backslash\{0\}$ (here $0:=(\mathcal{B}_{[0,\infty)}\ni E\mapsto 0)$ is the degenerate everywhere zero measure on  $\mathcal{B}_{[0,\infty)}$) and $q\in (0,\infty)$, 
 \begin{equation}\label{scale-gen}
\Phi_q\propto \hat{\nu}\vert_I\Leftrightarrow \left( A\reallywidehat{(\psi-p)\cdot \nu}=q\hat{\nu}\text{ on } I^\circ \text{ and ($\{0<\zeta<\infty,X_{\zeta-}=\infty\}$ is negligible or $\nu(\{0\})=0$)}\right).
 \end{equation}
The condition that $\{0<\zeta<\infty,X_{\zeta-}=\infty\}$ be negligible is that a.s. explosion does not occur; this is automatic if $p>0$ or else $p=0=\psi^{-1}(0)$ (in the former case $\xi$ is killed a.s. before $X$ can explode, in the latter $\xi$ does not drift to $\infty$ and again explosion cannot take place). It makes sense to, and we do specify $\nu_0$ as being the measure $\delta_{\psi^{-1}(p)}$ (up to a multiplicative constant from $(0,\infty)$), so that $\Phi_0\propto \widehat{\nu_0}\vert_I$. 
 
Lastly, set $\psi^\#:=\psi(\psi^{-1}(p)+\cdot)-p$, so that $\psi^\#$ is the Laplace exponent of a (for emphasis: not killed) spLp that is not drifting to $\infty$ (in terms of $\psi^\#$ it means that $\psi^\#(0)=0$ and $(\psi^\#)^{-1}(0)=0$) -- the Esscher transform. With, ceteris paribus, $\psi^\#$ the Laplace exponent of $\xi$,  i.e. with $(\psi^\#,A)$ the input data for $X$, we get $X^\#$ under the probabilities $(\PP^\#_x)_{x\in I}$ etc. According to \cite[Proposition~3.5(iv)]{vidmar2021characterizations}, for each $q\in [0,\infty)$, 
\begin{equation}\label{eq:measure-change}
\Phi_q\propto \Phi_0\times\Phi_q^\#\propto e^{-\psi^{-1}(p)\cdot}\times \Phi_q^\#.
\end{equation}

\section{The characterization and related results}\label{setions:conditions}
We prepare a reduction technique of ``exponential tilting'', which shall simplify the arguments and especially the notational overhang.

\begin{lemma}\label{lemma:measure-change}
\eqref{eq:cm-1} holds true for $X$ under $(\PP_x)_{x\in I}$ iff it holds true for $X^\#$ under $(\PP^\#_x)_{x\in I}$, in which case $\nu_q\propto (\cdot +\psi^{-1}(p))_\star \nu_q^\#$ (i.e. $\nu_q(\psi^{-1}(p)+\dd y)\propto \nu_q^\#(\dd y)$ in $y\in [0,\infty)$ and $\nu_q([0,\psi^{-1}(p)))=0$). In particular, if \eqref{eq:cm-1} is verified, then necessarily $\nu_q$ is carried by $[\psi^{-1}(p),\infty)$ for all $q\in (0,\infty)$.
\end{lemma}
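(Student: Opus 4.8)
The plan is to exploit the Esscher‑transform identity \eqref{eq:measure-change}, namely $\Phi_q\propto e^{-\psi^{-1}(p)\cdot}\times\Phi_q^\#$, which directly relates the scale functions of $X$ and $X^\#$ by multiplication with the exponential $e^{-\psi^{-1}(p)\cdot}$. The key elementary observation is that multiplication of a function by $e^{-c\cdot}$ (for a constant $c\ge 0$) corresponds, on the level of representing measures, to a shift: if $g=\hat{\mu}$ on $I$ for some $\mu\in\MM_I$, then $e^{-c\cdot}g(\cdot)=\widehat{(\cdot+c)_\star\mu}$ on $I$, because $e^{-c\theta}\int e^{-\theta z}\mu(\dd z)=\int e^{-\theta(z+c)}\mu(\dd z)=\int e^{-\theta y}\,((\cdot+c)_\star\mu)(\dd y)$. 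Conversely, division by $e^{-c\cdot}$, i.e. multiplication by $e^{c\cdot}$, corresponds to the reverse shift $(\cdot-c)_\star\mu$, but this operation only produces a measure on $[0,\infty)$ when $\mu$ is already carried by $[c,\infty)$. The whole statement will follow once this correspondence is pinned down and combined with the uniqueness (up to a positive multiplicative constant) of the representing measures asserted after \eqref{cm-equiv}.

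First I would prove the forward‑and‑backward equivalence of \eqref{eq:cm-1} for $X$ and $X^\#$. Suppose \eqref{eq:cm-1} holds for $X^\#$; then by the reformulation \eqref{cm-equiv} there are $\nu_q^\#\in\MM_I$ with $\Phi_q^\#\propto\widehat{\nu_q^\#}\vert_I$ for all $q\in(0,\infty)$. Put $\nu_q:=(\cdot+\psi^{-1}(p))_\star\nu_q^\#$; this is a measure on $\mathcal{B}_{[0,\infty)}$ carried by $[\psi^{-1}(p),\infty)$, and by the shift computation above $\widehat{\nu_q}(\theta)=e^{-\psi^{-1}(p)\theta}\,\widehat{\nu_q^\#}(\theta)$ for $\theta\in I$, which is finite for $\theta\in I$ since $\widehat{\nu_q^\#}$ is, so $\nu_q\in\MM_I$. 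By \eqref{eq:measure-change} we then get $\Phi_q\propto e^{-\psi^{-1}(p)\cdot}\times\Phi_q^\#\propto e^{-\psi^{-1}(p)\cdot}\times\widehat{\nu_q^\#}\vert_I=\widehat{\nu_q}\vert_I$, so again by \eqref{cm-equiv} property \eqref{eq:cm-1} holds for $X$, and the stated form of $\nu_q$ is exactly the one just constructed, unique up to a positive constant. For the converse direction, suppose \eqref{eq:cm-1} holds for $X$, so $\Phi_q\propto\widehat{\nu_q}\vert_I$ for some $\nu_q\in\MM_I$. Then from \eqref{eq:measure-change}, $\Phi_q^\#\propto e^{\psi^{-1}(p)\cdot}\times\Phi_q\propto e^{\psi^{-1}(p)\cdot}\times\widehat{\nu_q}\vert_I$, and I must show the right‑hand side is again the Laplace transform of a measure in $\MM_I$ — equivalently (by Bernstein) that $e^{\psi^{-1}(p)\cdot}\widehat{\nu_q}(\cdot)$ is cm on $I$.

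The main obstacle is this last point: showing that $e^{\psi^{-1}(p)\cdot}\widehat{\nu_q}$ is cm, which is equivalent to $\nu_q$ being carried by $[\psi^{-1}(p),\infty)$ (so that the reverse shift $(\cdot-\psi^{-1}(p))_\star\nu_q$ is a genuine measure on $[0,\infty)$ and serves as $\nu_q^\#$). I would establish this directly from the structural identity \eqref{scale-gen}, which $\nu_q$ must satisfy: $A\,\reallywidehat{(\psi-p)\cdot\nu_q}=q\,\widehat{\nu_q}$ on $I^\circ$. Since $\psi-p$ restricted to $[0,\psi^{-1}(p))$ is at most $0$ (indeed $\psi\le p$ there by definition of the right‑continuous inverse, with $\psi-p$ being negative on a right neighbourhood of $0$ because subordinators are excluded) while $\widehat{\nu_q}>0$ and $A>0$, any mass that $\nu_q$ placed in $[0,\psi^{-1}(p))$ would make $\reallywidehat{(\psi-p)\cdot\nu_q}$ fail to have the sign consistency forced by $q\widehat{\nu_q}/A>0$; a careful examination of the measure $(\psi-p)\cdot\nu_q$ — writing $\psi-p=(\psi-p)^+-(\psi-p)^-$ and noting the negative part is supported on $[0,\psi^{-1}(p)]$ — together with the injectivity of the Laplace transform on measures forces $\nu_q((0,\psi^{-1}(p)))=0$, and the endpoint $\{0\}$ is ruled out separately using the explosion clause of \eqref{scale-gen} exactly as $\nu_0=\delta_{\psi^{-1}(p)}$ is singled out. (Alternatively, and perhaps more cleanly, one can avoid analysing \eqref{scale-gen} by instead verifying the converse implication through the already‑proven forward implication applied to $X^\#$: since $(\psi^\#)^{-1}(0)=0$ and $X^{\#}$ has the same $A$, running the construction with input $(\psi^\#,A)$ and observing $(\psi^\#)^\#=\psi^\#$ shows $X^\#$ equals its own Esscher transform, so if \eqref{eq:cm-1} holds for $X$ one deduces it for $X^\#$ via the shift in the opposite direction — but this still requires knowing $\nu_q$ is carried by $[\psi^{-1}(p),\infty)$, so the analysis of \eqref{scale-gen} seems unavoidable and is the crux.) Once carriedness is in hand, setting $\nu_q^\#:=(\cdot-\psi^{-1}(p))_\star\nu_q$ completes the converse and simultaneously yields the final ``in particular'' assertion, which is just the restatement that $\nu_q$ is carried by $[\psi^{-1}(p),\infty)$ for every $q\in(0,\infty)$.
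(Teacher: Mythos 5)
Your decomposition is the right one, and the direction ``\eqref{eq:cm-1} for $X^\#$ $\Rightarrow$ \eqref{eq:cm-1} for $X$'' is correct as written: shifting the representing measure by $+\psi^{-1}(p)$ is exactly what \eqref{eq:measure-change} calls for, and uniqueness of representing measures then gives $\nu_q\propto(\cdot+\psi^{-1}(p))_\star\nu_q^\#$. You also correctly isolate the crux of the converse, namely that $\nu_q$ must be carried by $[\psi^{-1}(p),\infty)$. The problem is that your proposed proof of this carriedness does not close. The sign analysis of $A\,\reallywidehat{(\psi-p)\cdot\nu_q}=q\widehat{\nu_q}$ can at best exclude mass of $\nu_q$ where $\psi-p<0$; when $p=0<\psi^{-1}(0)$ the density $\psi-p$ vanishes at $0$, so an atom of $\nu_q$ at $0$ is invisible to the convolution identity (Example~\ref{example:logs-ppsmp} exhibits exactly such a spurious solution $m_q'$ with an atom at $0$). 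Your fallback --- ``the endpoint $\{0\}$ is ruled out by the explosion clause of \eqref{scale-gen}'' --- only bites when explosion is non-negligible: the clause is the disjunction ``explosion negligible \emph{or} $\nu(\{0\})=0$'', which is vacuously satisfied whenever explosion is negligible, and negligible explosion is perfectly compatible with $\psi^{-1}(0)>0$. In that case your argument yields nothing, so the step you yourself flag as the crux is left with a genuine hole.

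The carriedness is in fact immediate from \eqref{eq:scale-identity}, which is presumably why the paper disposes of the lemma in one line. For $\lll\le x$ from $I$ one has $\Phi_q(x)=\Phi_q(\lll)\,\PP_x[e^{-qT_\lll^-};T_\lll^-<\zeta]\le\Phi_q(\lll)\,\PP_x(T_\lll^-<\zeta)=\Phi_q(\lll)\,e^{-\psi^{-1}(p)(x-\lll)}$, using $\Phi_0\propto e^{-\psi^{-1}(p)\cdot}$; equivalently, $\Phi_q^\#\propto e^{\psi^{-1}(p)\cdot}\Phi_q$ is bounded on every half-line $[\lll,\infty)$ (it is itself a scale function, hence nonincreasing by \eqref{eq:scale-identity} applied to $X^\#$). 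But if $\nu_q([0,\psi^{-1}(p)-\epsilon])>0$ for some $\epsilon>0$, then $e^{\psi^{-1}(p)x}\widehat{\nu_q}(x)\ge e^{\epsilon x}\nu_q([0,\psi^{-1}(p)-\epsilon])\to\infty$ as $x\to\infty$. Hence $\nu_q([0,\psi^{-1}(p)))=0$, so $\nu_q^\#:=(\cdot-\psi^{-1}(p))_\star\nu_q$ is a genuine element of $\MM_I$ with $\Phi_q^\#\propto\widehat{\nu_q^\#}\vert_I$, completing the converse and the ``in particular'' assertion simultaneously. This one observation replaces your entire analysis of \eqref{scale-gen}.
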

Basically it means that in the proofs to follow we will be able to limit ourselves, without loss of generality, to the case when $\psi^{-1}(0)=0=p$.
\begin{proof}
Immediate from \eqref{cm-equiv} and \eqref{eq:measure-change}.
\end{proof}
The characterization of \eqref{eq:cm-1} we split into two legs. Here is the first.

\begin{proposition}\label{proposition:P-necessary}
If \eqref{eq:cm-1} holds true, then $\frac{1}{A}$ is cm.
\end{proposition}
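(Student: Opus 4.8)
The plan is first to use Lemma~\ref{lemma:measure-change} to reduce to the Esscher-transformed process: as the velocity function $A$ is shared by $X$ and $X^\#$, the assertion ``$\frac1A$ is cm'' is literally the same for both, so I may and shall assume throughout that $\psi^{-1}(0)=0=p$. Two consequences of this will be exploited: (i) $\psi\geq0$ on $[0,\infty)$ (a convex function vanishing at the origin with $\psi^{-1}(0)=0$ is nonnegative there); and (ii) $\Phi_0\propto e^{-\psi^{-1}(0)\cdot}$ is constant, so that \eqref{eq:scale-identity} evaluated at $q=0$ yields $\PP_x(T_\lll^-<\zeta)=1$ for all $\lll\leq x$ from $I$. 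Assume now \eqref{eq:cm-1}. By \eqref{cm-equiv} pick, for each $q\in(0,\infty)$, a measure $\nu_q\in\MM_I\backslash\{0\}$ with $\Phi_q\propto\widehat{\nu_q}\vert_I$; by \eqref{scale-gen} (taken with $p=0$) it satisfies
\begin{equation}\label{eq:plan-identity}
A\,\widehat{\psi\cdot\nu_q}=q\,\widehat{\nu_q}\qquad\text{on }I^\circ,
\end{equation}
both sides being finite there since $\widehat{\nu_q}\vert_I<\infty$ and, by (i), $\psi\cdot\nu_q$ is an honest nonnegative measure.

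Fix $\lll\in I^\circ$ and, rescaling $\nu_q$, arrange $\widehat{\nu_q}(\lll)=1$, so that $\widehat{\nu_q}(\lll+x)=\Phi_q(\lll+x)/\Phi_q(\lll)=\PP_{\lll+x}[e^{-qT_\lll^-};T_\lll^-<\zeta]$ for $x\geq0$. The key device is to introduce, for each $q\in(0,\infty)$, the \emph{finite} nonnegative measure on $\mathcal{B}_{[0,\infty)}$
\[
m_q(\dd\lambda):=\tfrac1q\,e^{-\lll\lambda}\,\psi(\lambda)\,\nu_q(\dd\lambda).
\]
Using \eqref{eq:plan-identity}, its total mass equals $\widehat{m_q}(0)=\tfrac1q\,\widehat{\psi\cdot\nu_q}(\lll)=\widehat{\nu_q}(\lll)/A(\lll)=1/A(\lll)$, and, more generally, for every $x\geq0$ (so that $\lll+x\in I^\circ$),
\[
\widehat{m_q}(x)=\tfrac1q\,\widehat{\psi\cdot\nu_q}(\lll+x)=\frac{\widehat{\nu_q}(\lll+x)}{A(\lll+x)}=\frac{\PP_{\lll+x}[e^{-qT_\lll^-};T_\lll^-<\zeta]}{A(\lll+x)}.
\]

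Now let $q\downarrow0$. Monotone convergence gives $\PP_{\lll+x}[e^{-qT_\lll^-};T_\lll^-<\zeta]\uparrow\PP_{\lll+x}(T_\lll^-<\zeta)=1$, hence $\widehat{m_q}(x)\uparrow 1/A(\lll+x)$ for each $x\geq0$. Each $\widehat{m_q}$ is cm on $(0,\infty)$, being the Laplace transform of a finite nonnegative measure on $[0,\infty)$; and the cone of cm functions is closed under pointwise limits \cite[Corollary~1.6]{bernstein}. Therefore $(0,\infty)\ni x\mapsto 1/A(\lll+x)$ is cm. Since $\lll\in I^\circ$ was arbitrary, this settles the claim for all $\lll\in I^\circ$; and if $\inf I\in I$, applying what has just been proved with $\lll$ replaced by $\inf I+\varepsilon$ and letting $\varepsilon\downarrow0$ shows $(0,\infty)\ni x\mapsto 1/A(\inf I+x)$ to be of class $C^\infty$ with derivatives alternating in sign, i.e. cm. Hence $\frac1A$ is cm in the sense of Subsection~\ref{subsection:cm}.

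The only point calling for care --- and the sole place where the reduction $\psi^{-1}(0)=0=p$ is genuinely needed --- is the identification of the limit: it is exactly the normalisation $\PP_{\lll+x}(T_\lll^-<\zeta)\equiv1$ afforded by that reduction which makes $\lim_{q\downarrow0}\widehat{m_q}$ equal $1/A(\lll+\cdot)$ exactly, rather than an exponential tilt thereof. Everything else --- the construction of $m_q$, the bookkeeping with \eqref{eq:plan-identity}, and the stability of complete monotonicity under increasing pointwise limits --- is soft.
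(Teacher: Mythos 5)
Your proof is correct and follows essentially the same route as the paper's: reduce to $\psi^{-1}(0)=0=p$ via Lemma~\ref{lemma:measure-change}, use \eqref{scale-gen} to write $\widehat{\nu_q}/A$ as the Laplace transform of the measure $\frac{\psi-p}{q}\cdot\nu_q$ (your tilted $e^{-\lll\lambda}\psi(\lambda)\nu_q(\dd\lambda)/q$ is just this shifted to the origin), let $q\downarrow0$ using $\widehat{\nu_q}\uparrow1$, and invoke stability of the Laplace-transform/cm class under pointwise limits --- the paper cites the continuity theorem for Laplace transforms where you cite \cite[Corollary~1.6]{bernstein}, which is the same tool. One cosmetic remark: your symbol $m_q$ collides with the measures of Definition~\ref{definition:measures-mq}, so rename it if this is to sit alongside the rest of the paper.
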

\begin{proof}
By Lemma~\ref{lemma:measure-change} we may and do assume $\psi^{-1}(0)=0=p$. Fix $\lll\in I^\circ$. By \eqref{cm-equiv}-\eqref{scale-gen}
$$A  \reallywidehat{\frac{\psi-p}{q}\cdot\nu_q}=\widehat{\nu_q}\text{ on }[\lll,\infty),\quad q\in (0,\infty),$$
where we may and do assume (by renormalizing the measures if necessary) that $\widehat\nu_q(\lll)=1$ for all $q\in (0,\infty)$. Now $\widehat{\nu_q}=\PP_\cdot[e^{-qT_\lll^-};T_\lll^-<\zeta]\uparrow \PP_\cdot(T_\lll^-<\zeta)= 1$ on $[\lll,\infty)$ as $q\downarrow 0$, since $\psi^{-1}(0)=0=p$. By the continuity theorem for Laplace transforms \cite[Theorem~XIII.1.2a]{feller} it follows that $\reallywidehat{\frac{\psi-p}{q}\cdot\nu_q}$ is converging pointwise on $[\lll,\infty)$ to $\hat\rho$ for some measure $\rho$ on $\mathcal{B}_{[0,\infty)}$ as $q\downarrow 0$. Thus $$A\hat{\rho}=1\text{, i.e. }\frac{1}{A}=\hat\rho,$$ both on $[\lll,\infty)$. Since this is true of every $\lll\in I^\circ$ it follows that $\frac{1}{A}$ is in fact cm. 
\end{proof}
For the converse, second leg, we shall find it convenient to introduce
\begin{definition}\label{definition:measures-mq}
Suppose $\frac{1}{A}$ is cm. Let $\gamma\in \MM_{I^\circ}\backslash \{0\}$ be the associated representing measure  for which $A=\frac{1}{\hat{\gamma}}$ on $I^\circ$. For $q\in (0,\infty)$ we define
\begin{align}
\nonumber m_q&:=\sum_{k\in \mathbb{N}_0} q^k\underbrace{\frac{1}{\psi-p}\cdot\left(  \left(\cdots\frac{1}{\psi-p}\cdot\left(\left(\frac{1}{\psi-p}\cdot \left(\delta_{\psi^{-1}(p)}\star \gamma\right)\right)\star\gamma\right)\cdots \right)\star\gamma\right)}_{\text{$k$-times}}\\
&=\delta_{\psi^{-1}(p)}+\frac{q}{\psi-p}\cdot \left((\psi^{-1}(p)+\cdot)_\star\gamma\right)+\frac{q}{\psi-p}\cdot\left(\left(\frac{q}{\psi-p}\cdot (\psi^{-1}(p)+\cdot)_\star\gamma\right)\star \gamma\right)+\cdots,\label{eq:new-measures}
\end{align}
(a measure on $\mathcal{B}_{[0,\infty)}$) and we interpret $\frac{q}{0}=\infty$.
\end{definition}
 In a very camouflaged way the measures of \eqref{eq:new-measures} make their appearence in  \cite[Theorem~3.1(ii)]{vidmar_2019} and again in \cite[Theorem~4.3]{f-l-z}, their applicability being subject to  sufficient conditions that we refine here to an equivalence.  They are all equivalent as $q\in (0,\infty)$ varies and satisfy the convolutional relations 
 \begin{equation}\label{eq:convolutional-relation}
 m_q=\delta_{\psi^{-1}(p)}+\frac{q}{\psi-p}\cdot (m_q\star \gamma),\quad q\in (0,\infty),
 \end{equation} 
 which can be found  in \cite[Remark~4.6]{li-zhou} for the case when $\psi^{-1}(0)=0=p$. The simplified form of $m_q$, $q\in (0,\infty)$, which results when $\psi^{-1}(0)=0=p$ is worth highlighting: 
 \begin{equation}\label{eq:simplified}
m_q=\sum_{k\in \mathbb{N}_0} q^k\frac{1}{\psi}\cdot\left(  \left(\cdots\frac{1}{\psi}\cdot\left(\left(\frac{1}{\psi}\cdot \left(\delta_0\star \gamma\right)\right)\star\gamma\right)\cdots \right)\star\gamma\right)=\delta_0+\frac{q}{\psi}\cdot \gamma+\frac{q}{\psi}\cdot\left(\left(\frac{q}{\psi}\cdot \gamma\right)\star\gamma\right)+\cdots.
\end{equation}

\begin{proposition}\label{corollary:P-sufficient}
Assume  $\frac{1}{A}$ is cm. Let $\gamma\in \MM_{I^\circ}\backslash \{0\}$ be the associated representing measure  for which $A=\frac{1}{\hat{\gamma}}$ on $I^\circ$. Denote $\alpha:=\inf\mathrm{supp}(\gamma)$.

\begin{enumerate}[(i)]
\item\label{sufficient:i} We have that  \eqref{eq:cm-1} holds true, and therefore \eqref{cm-equiv} is also true, which gives us access to the $\nu_q$, $q\in (0,\infty)$, each of which is unique up to a multiplicative constant from $(0,\infty)$.
\item\label{sufficient:ii}  For each $q\in (0,\infty)$,  $\nu_q$ is identified, uniquely up to a multiplicative constant from $(0,\infty)$, by the conditions: $\nu_q\in \MM_I\backslash\{0\}$,  $\nu_q$ is carried by $[\psi^{-1}(p),\infty)$ and  
\begin{equation}\label{eq:nu-q}
(\psi-p)\cdot \nu_q=q\gamma\star\nu_q.
\end{equation}
\item\label{sufficient:iii} For all $q\in (0,\infty)$, 
$$\alpha>0\Rightarrow \nu_q(\{\psi^{-1}(p)\})>0\Leftrightarrow   \nu_q\propto m_q \Leftrightarrow m_q\in \MM_I\Rightarrow \gamma(\{0\})=0.$$
%where $\propto$ means equality up to a multiplicative constant from $(0,\infty)$.% (either one of which and hence  \emph{is} the case if $\alpha>0$, and in which case \emph{necessarily} $\gamma(\{0\})=0$) and then and only then we may take $\nu_q=m_q$.
\end{enumerate}
  \end{proposition}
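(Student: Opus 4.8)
The plan is to prove the three parts in order, leaning on Lemma~\ref{lemma:measure-change} to reduce to the case $\psi^{-1}(0)=0=p$ throughout (so that $\psi^\#=\psi$, $\delta_{\psi^{-1}(p)}=\delta_0$, and $m_q$ takes the simplified form \eqref{eq:simplified}); the general case is then recovered by the transformation $\nu_q\propto(\cdot+\psi^{-1}(p))_\star\nu_q^\#$ and by noting that $A$ is unchanged under the Esscher transform. For part~\ref{sufficient:i}, I would exhibit a candidate representing measure. Under the reduction, the natural candidate is $m_q$ of \eqref{eq:simplified}; one checks first that the defining series converges to a genuine (locally finite, non-zero, with $m_q(\{0\})>0$) measure on $\mathcal{B}_{[0,\infty)}$ — here the hypotheses on $\psi$ (not a subordinator, $\psi$ eventually positive and convex so $\frac1\psi\cdot(\text{anything})$ makes sense off a bounded set) and the fact that $\gamma\in\MM_{I^\circ}$ keep each term finite on $I^\circ$ — and that it obeys the convolutional identity \eqref{eq:convolutional-relation}, i.e. $m_q=\delta_0+\frac{q}{\psi}\cdot(m_q\star\gamma)$, by a direct term-by-term manipulation of the series. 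Then I verify the right-hand side of \eqref{scale-gen}: multiplying \eqref{eq:convolutional-relation} by $\psi$ and using $A=1/\hat\gamma$ gives $A\,\widehat{\psi\cdot m_q}=A\,\widehat{q\,m_q\star\gamma}=q\,\widehat{m_q}\,\widehat{\gamma}\,/\,\widehat\gamma=q\,\widehat{m_q}$ on $I^\circ$ — so $\widehat{\psi\cdot m_q}$ is finite on $I^\circ$, and on all of $I$ since we must check $m_q\in\MM_I$ — and the explosion side condition is met because either $p>0$ or $\psi^{-1}(0)=0$. But $m_q\in\MM_I$ is exactly the delicate issue (see below), so for part~\ref{sufficient:i} I would instead argue more robustly: cm of $\frac1A$ is a pointwise-on-$I^\circ$ statement, and by restricting to $[\lll,\infty)$ for arbitrary $\lll\in I^\circ$ one can build, for each such $\lll$, a finite measure solving the relevant identity on $[\lll,\infty)$ (truncating $\gamma$ if needed so that $\hat\gamma$ is finite there), obtain that $\Phi_q$ restricted to $[\lll,\infty)$ is a Laplace transform, and let $\lll\downarrow\inf I$; continuity of $\Phi_q$ and uniqueness of representing measures then upgrade this to genuine cm on $I$, giving \eqref{cm-equiv} and hence the $\nu_q$.

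For part~\ref{sufficient:ii}: once \eqref{cm-equiv} holds, \eqref{scale-gen} says $A\,\widehat{(\psi-p)\cdot\nu_q}=q\,\widehat{\nu_q}$ on $I^\circ$; substituting $A=1/\hat\gamma$ and clearing denominators gives $\widehat{(\psi-p)\cdot\nu_q}=q\,\widehat\gamma\,\widehat{\nu_q}=q\,\widehat{\gamma\star\nu_q}$ on $I^\circ$, hence on a neighbourhood of $+\infty$; since Laplace transforms finite near $+\infty$ determine measures on $[0,\infty)$ uniquely (the fact quoted in Subsection~\ref{subsection:cm}), this upgrades to the measure identity \eqref{eq:nu-q}. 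Conversely, any $\nu_q\in\MM_I\setminus\{0\}$ carried by $[\psi^{-1}(p),\infty)$ and satisfying \eqref{eq:nu-q} runs this computation backwards to satisfy the left side of \eqref{scale-gen} (the support condition handles $\nu_q(\{0\})=0$ versus explosion, via Lemma~\ref{lemma:measure-change}), so $\Phi_q\propto\widehat{\nu_q}\vert_I$, and uniqueness of the scale function (up to a positive constant) pins down $\nu_q$ up to a positive constant. The main point to be careful about is that \eqref{eq:nu-q} alone, without the integrability $\nu_q\in\MM_I$ and the support constraint, need not characterize $\nu_q$ — this is precisely why all three conditions are listed.

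For part~\ref{sufficient:iii}, assume $\alpha:=\inf\supp(\gamma)>0$. The chain of implications I would establish is: (a) $\nu_q(\{\psi^{-1}(p)\})>0\Rightarrow\nu_q\propto m_q$: normalize so $\nu_q(\{\psi^{-1}(p)\})$ matches $m_q(\{\psi^{-1}(p)\})=1$ (under the reduction, $\nu_q(\{0\})=1$); then \eqref{eq:nu-q} rewritten as $\nu_q=\delta_0+\frac{q}{\psi}\cdot(\gamma\star\nu_q)$ (legitimate since $\gamma\star\nu_q$ has no mass below $\alpha>0$, where $\psi>0$, so $\frac1\psi$ is harmless) can be iterated; because $\alpha>0$, each convolution power $\gamma^{\star k}$ is supported in $[k\alpha,\infty)$, so on any bounded interval only finitely many terms of the iteration contribute and the iteration converges to exactly the series \eqref{eq:simplified} defining $m_q$ — giving $\nu_q\propto m_q$, and in particular $m_q\in\MM_I$. (b) $\nu_q\propto m_q\Rightarrow m_q\in\MM_I$ is then immediate since $\nu_q\in\MM_I$. (c) $m_q\in\MM_I\Rightarrow\nu_q\propto m_q$: when $m_q\in\MM_I$, $m_q$ satisfies the hypotheses of part~\ref{sufficient:ii} (it is non-zero, carried by $[\psi^{-1}(p),\infty)$, and solves \eqref{eq:nu-q} by \eqref{eq:convolutional-relation}), so by uniqueness $\nu_q\propto m_q$; and this forces $\nu_q(\{\psi^{-1}(p)\})\propto m_q(\{\psi^{-1}(p)\})=1>0$, closing the cycle among the three middle conditions. (d) $\nu_q\propto m_q\Rightarrow\gamma(\{0\})=0$: if $\gamma(\{0\})=c>0$ then, because $\alpha>0$ would be contradicted unless... — actually the cleaner route: $m_q$ involves $\frac1\psi\cdot\gamma$, and near $0$, $\psi$ behaves so that $\frac1\psi$ is not integrable against an atom at $0$ (more precisely, for a spLp that is not a subordinator, $\psi(\lambda)/\lambda\to$ a finite limit or $-\infty$, and $1/\psi$ fails to be Laplace-transformable if an atom at $0$ is present because $\int^\infty\frac{d\lambda}{\lambda\psi(\lambda)}$-type divergence appears), which would violate $m_q\in\MM_I$ or even the well-definedness assumed; so $\gamma(\{0\})=0$. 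I expect the genuine obstacle to be exactly this interplay between $\supp(\gamma)$, the behaviour of $1/\psi$ near the origin and at infinity, and membership in $\MM_I$ — i.e. showing the series \eqref{eq:simplified} both converges and has finite Laplace transform on all of $I$ (not merely $I^\circ$) when $\alpha>0$, and showing it cannot if $\gamma$ charges $0$; assumption~\ref{fund:b} and the integral test from \cite{li2020integral} cited after \eqref{eq:F-condition} are presumably what make the $\inf I\in I$ boundary case work.
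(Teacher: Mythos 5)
The core of part~\ref{sufficient:i} is missing. When $\alpha=0$ or $\gamma\notin\MM_I$ (e.g.\ the continuous-state branching case $\gamma=\mathscr{L}$ of Example~\ref{example:csbp}), the series \eqref{eq:simplified} need not define an element of $\MM_I$ --- indeed $m_q$ can fail even to be locally finite when $\gamma(\{0\})>0$ --- so there is no explicit candidate representing measure and \eqref{scale-gen} cannot be invoked directly. Your fallback, ``restrict to $[\lll,\infty)$ and truncate $\gamma$ so that $\hat\gamma$ is finite there'', does not address this: $\hat\gamma$ is already finite on $I^\circ$ by hypothesis, and shrinking the state space does nothing to tame the terms $\frac{1}{\psi}\cdot(\cdots\star\gamma)$ near the origin of the Laplace variable. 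What is actually needed (and what the paper does) is to modify $A$ itself, replacing $\gamma$ by a $\gamma^\epsilon$ whose support is bounded away from $0$, apply the easy case ($\alpha>0$, $\gamma\in\MM_I$) to the modified process, and then transfer the conclusion back to $X$ by a pathwise argument: $T_\lll^{\epsilon-}=\int_0^{\sigma_\lll^-}\frac{\dd u}{A^\epsilon(\xi_u)}\to T_\lll^-$ a.s.\ by bounded convergence, whence the Laplace transforms in \eqref{eq:approx} converge pointwise, and cm is preserved under pointwise limits. Your sketch never connects the scale function of the truncated process to that of the original one, so part~\ref{sufficient:i} is not established outside the case $\alpha>0$, $\gamma\in\MM_I$.

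Two further points. In part~\ref{sufficient:iii} you assume $\alpha>0$ throughout, but the two equivalences and the final implication in the displayed chain are asserted (and needed, cf.\ Examples~\ref{example:levy-process} and~\ref{example:csbp}) without that hypothesis; your argument for $\nu_q(\{\psi^{-1}(p)\})>0\Rightarrow\nu_q\propto m_q$ leans on $\gamma^{\star k}$ being supported in $[k\alpha,\infty)$ and so does not survive $\alpha=0$. The paper instead iterates \eqref{eq:nu-q} only to the one-sided bound $\nu_q\geq m_q$, which yields $m_q\in\MM_I$ from $\nu_q\in\MM_I$, and obtains $m_q\in\MM_I\Rightarrow\nu_q\propto m_q$ from the monotone convergence $\widehat{m^\epsilon_q}\uparrow\widehat{m_q}$ inside the approximation scheme of~\ref{sufficient:i}. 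Finally, for $\alpha>0\Rightarrow m_q\in\MM_I$ the boundary case $\inf I\in I$ with $\hat\gamma(\inf I)=\infty$, which you leave as ``presumably'' handled by assumption~\ref{fund:b}, is in fact settled by applying the result on $[l,\infty)$ for $l\in I^\circ$ and letting $\lll\downarrow\inf I$ in \eqref{eq:scale-identity}, using monotone convergence and the continuity of the scale functions. Part~\ref{sufficient:ii} of your proposal is fine and matches the paper.
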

 \begin{proof}
By Lemma~\ref{lemma:measure-change} we may and do assume that $\psi^{-1}(0)=0=p$.

 \ref{sufficient:ii}. Once \ref{sufficient:i} has been established  this is just  \eqref{scale-gen}, since the event  $\{0<\zeta<\infty,X_{\zeta-}=\infty\}$ (of explosion) is negligible thanks to $\psi^{-1}(0)=0$. But anyway already now we know that for a given $q\in (0,\infty)$ and  $\nu\in \MM_I\backslash\{0\}$,  $\Phi_q\propto\hat{\nu}\vert_I$ iff  $(\psi-p)\cdot \nu=q\gamma\star\nu$.
 
\ref{sufficient:i}. Suppose  $\alpha>0$ and $\gamma\in \MM_I$ in the first instance. Let $q\in (0,\infty)$. From \eqref{eq:simplified} we estimate $$\widehat{m_q}\leq \sum_{k\in \mathbb{N}_0}\frac{(q\hat{\gamma})^k}{\prod_{l=1}^k\psi(\alpha l)}<\infty\text{ on }I, \quad q\in (0,\infty),$$ where we have also used the fact that $\psi$ grows at least linearly at $\infty$. Thus $m_q\in\MM_I\backslash \{0\}$. Besides, by \eqref{eq:convolutional-relation}, 
$\psi\cdot m_q=qm_q\star \gamma$. We see that  we indeed have \eqref{eq:cm-1}, therefore \eqref{cm-equiv}, and $\nu_q\propto m_q$ for all $q\in (0,\infty)$.

The general case is handled by taking pathwise limits, approximating $A$. For $\epsilon\in (0,\infty)$ let $\gamma^\epsilon:=\gamma(\{0\})\delta_\epsilon +\mathbbm{1}_{[\epsilon,1/\epsilon)}\cdot \gamma$ or  $\gamma^\epsilon:=\gamma(\{0\})\delta_\epsilon +\mathbbm{1}_{[\epsilon,\infty)}\cdot \gamma$ according as to whether $\inf I\in I$ or $\inf I\notin I$. The measure $\gamma^\epsilon$ is non-zero for all sufficiently small $\epsilon>0$, we restrict to those and note that
\begin{itemize}
\item $\gamma^\epsilon$ has support bounded away from zero and belongs to $\MM_I$; 
\item $A^\epsilon:=\frac{1}{\widehat{\gamma^\epsilon}\vert_{I^\circ}}$ meets the conditions of Subsection~\ref{subsection:time-changed} in lieu of $A$;
\item as $\epsilon\downarrow 0$, $\frac{1}{A^\epsilon}=\widehat{\gamma^\epsilon}\to \hat{\gamma}=\frac{1}{A}$ pointwise and  boundedly on subsets of $I^\circ$ bounded from below. 
 \end{itemize}
 Let $q\in (0,\infty)$ and $\lll\in I$. By what we have just shown, in the obvious notation, for the indicated small enough $\epsilon>0$,
\begin{equation}\label{eq:approx}
(0,\infty)\ni x\mapsto \PP_{\lll +x}[e^{-q T_\lll^{\epsilon-}};T_\lll^{\epsilon-}<\zeta^\epsilon]=\frac{\widehat{m^\epsilon_q}(\lll+x)}{\widehat{m^\epsilon_q}(\lll)}
\end{equation}
 is cm.  If we let $\sigma_\lll^-:=\inf\{u\in [0,\eta):\xi_u\land \xi_{u-}\leq \lll\}$, then, by bounded convergence, a.s. $T_\lll^{\epsilon-}=\int_0^{\sigma_\lll^-}\frac{\dd u}{A^\epsilon(\xi_u)}\to  \int_0^{\sigma_\lll^-}\frac{\dd u}{A(\xi_u)}=T_\lll^-$ as $\epsilon\downarrow 0$ on $\{\sigma_\lll^-<\eta\}=\{T_\lll^{\epsilon-}<\zeta^\epsilon\}=\{T_\lll^-<\zeta\}$. Thus, passing to the limit $\epsilon\downarrow 0$ in \eqref{eq:approx} by bounded convergence again, we get \eqref{eq:cm-1}. Furthermore, if $m_q\in \MM_I$, then clearly we must have $\gamma(\{0\})=0$ so that by monotone convergence $\widehat{m^\epsilon_q}\uparrow \widehat{m_q}$ on $I$ as  $\epsilon\downarrow 0$, which in view of \eqref{eq:approx} means that $\nu_q\propto m_q$.% and $\nu_q(\{0\})=m_q(\{0\})>0$.

\ref{sufficient:iii}.  We have already seen in the proof of \ref{sufficient:i} that the condition $m_q\in \MM_I$ is sufficent for $\nu_q\propto m_q$, as well as for $\gamma(\{0\})=0$. In order for $\nu_q\propto m_q$ it must be that $\nu_q(\{0\})>0$, just because $m_q(\{0\})\geq 1>0$. This establishes the $\Leftarrow$ directions of the two equivalences as well as the last $\Rightarrow$ implication.

Suppose now that $\nu_q(\{0\})>0$, without loss of generality $=1$. Put $\zeta_q:=\nu_q-\delta_0$. From \eqref{eq:nu-q} we get $\psi\cdot \zeta_q= q\gamma+q\gamma\star\zeta_q$. Thus $\gamma(\{0\})=0$ and $\nu_q= \delta_0+\frac{q}{\psi}\cdot \gamma+\frac{q}{\psi}\cdot (\zeta_q\star \gamma)$. Another insertion of $\zeta_q=\frac{q}{\psi}\cdot (\gamma+\gamma\star\zeta_q)$ gives $\nu_q= \delta_0+\frac{q}{\psi}\cdot \gamma+\frac{q}{\psi}\cdot\left(\left(\frac{q}{\psi}\cdot \gamma\right)\star\gamma\right)+\frac{q}{\psi}\cdot\left(\left(\frac{q}{\psi}\cdot \left(\zeta_q\star \gamma\right)\right)\star\gamma\right)$. Inductively we conclude that $\nu_q\geq m_q$. But $\nu_q\in \MM_I$, therefore $m_q\in \MM_I$. This concludes establishing the two equivalences.

Finally, suppose that $\alpha>0$; we check the first $\Rightarrow$ implication by verifying that $m_q\in \MM_I$. Let again $q\in (0,\infty)$. We have already verified in the proof of \ref{sufficient:i} that $m_q\in \MM_I$ if in addition we assumed $\gamma\in \MM_I$. Since anyway automatically $\gamma\in \MM_{I^\circ}$ all that is left to consider is the case when $\inf I\in I$. But in that case, for any given $l\in I^\circ$ we may apply the result to the interval $[l,\infty)$ in lieu of $I$ to get 
 $$\frac{\Phi_q(x)}{\Phi_q(\lll)}=\PP_x[e^{-q T_\lll^-};T_\lll^-<\zeta]=\frac{\widehat{m_q}(x)}{\widehat{m_q}(\lll)}\text{ for all $\lll\leq x$ from $[l,\infty)$, $\therefore$ for all $\lll\leq x$ from $I^\circ$}.$$
In the preceding we may now pass to the limit $\lll\downarrow\inf I$ for any fixed $x\in I^\circ$ and get, due to  monotone convergence and the continuity of the scale functions, that necessarily $\widehat{m_q}(\inf I)<\infty$, i.e. $m_q\in \MM_I$.

Thus all the implications (and equivalences) have been verified.
\end{proof}
Combining Propositions~\ref{proposition:P-necessary} and~\ref{corollary:P-sufficient} we get

\begin{theorem}\label{theorem:time-changed-cm}
The following statements are equivalent.
\begin{enumerate}[(I)]
\item Condition~\eqref{eq:cm-1} holds true.
\item $\frac{1}{A}$ is cm.\qed
\end{enumerate}
\end{theorem}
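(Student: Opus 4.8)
The plan is to assemble the equivalence from the two one-directional results already in hand. The statement splits into $(\mathrm{I})\Rightarrow(\mathrm{II})$ and $(\mathrm{II})\Rightarrow(\mathrm{I})$, and I would dispatch each by invoking one of the preceding propositions.

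For $(\mathrm{I})\Rightarrow(\mathrm{II})$ I would simply cite Proposition~\ref{proposition:P-necessary}. The idea there is: after the Esscher reduction of Lemma~\ref{lemma:measure-change} one may assume $\psi^{-1}(0)=0=p$; then \eqref{eq:cm-1} together with \eqref{cm-equiv} furnishes representing measures $\nu_q\in\MM_I$ with $\Phi_q\propto\widehat{\nu_q}$, the identity from \eqref{scale-gen} reads $A\,\reallywidehat{\frac{\psi-p}{q}\cdot\nu_q}=\widehat{\nu_q}$ on $[\lll,\infty)$, and normalizing $\widehat{\nu_q}(\lll)=1$ and letting $q\downarrow 0$ (so that $\widehat{\nu_q}\uparrow\PP_\cdot(T_\lll^-<\zeta)\equiv 1$ on $[\lll,\infty)$) lets the continuity theorem for Laplace transforms identify the limit of $\reallywidehat{\frac{\psi-p}{q}\cdot\nu_q}$ as some $\hat\rho$. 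Hence $A\hat\rho=1$, i.e. $\frac1A=\hat\rho$, on each $[\lll,\infty)$, and since $\lll\in I^\circ$ was arbitrary, $\frac1A$ is cm.

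For the converse $(\mathrm{II})\Rightarrow(\mathrm{I})$ I would invoke Proposition~\ref{corollary:P-sufficient}\ref{sufficient:i}. The route is first to treat the easy case in which the representing measure $\gamma$ of $\frac1A$ has support bounded away from zero and lies in $\MM_I$: there the series \eqref{eq:new-measures}, which simplifies to \eqref{eq:simplified}, converges because $\widehat{m_q}\leq\sum_{k}(q\hat\gamma)^k/\prod_{l=1}^k\psi(\alpha l)<\infty$ on $I$ (using $\alpha>0$ and the at-least-linear growth of $\psi$ at infinity), so $m_q\in\MM_I\backslash\{0\}$; the convolutional relation \eqref{eq:convolutional-relation} reduces to $\psi\cdot m_q=q\,m_q\star\gamma$, whence \eqref{scale-gen} gives $\Phi_q\propto\widehat{m_q}$ and so \eqref{eq:cm-1}. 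The general $\frac1A$ is then reached by pathwise approximation: replace $\gamma$ by truncations $\gamma^\epsilon$ (adding the atom $\gamma(\{0\})\delta_\epsilon$, cutting off near $0$, and, when $\inf I\in I$, also far out), verify that $A^\epsilon:=1/\widehat{\gamma^\epsilon}$ meets the standing hypotheses of Subsection~\ref{subsection:time-changed} and that $1/A^\epsilon\to1/A$ boundedly on lower-bounded subsets of $I^\circ$, deduce by bounded convergence that the associated first-passage times satisfy $T_\lll^{\epsilon-}\to T_\lll^-$ a.s. on the common event $\{T_\lll^-<\zeta\}$, and pass to the limit $\epsilon\downarrow 0$ in the already-established cm relation \eqref{eq:approx}, using that the class of cm functions is closed under pointwise limits.

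The only genuine work — and the step I expect to be the main obstacle — lies in the reverse direction, specifically the approximation argument: checking that the truncated data $\gamma^\epsilon$ are admissible (so that the processes $X^\epsilon$ exist and are well-behaved) and that $T_\lll^{\epsilon-}\to T_\lll^-$ pathwise on the correct event. Once Propositions~\ref{proposition:P-necessary} and~\ref{corollary:P-sufficient}\ref{sufficient:i} are in place, the Theorem itself is immediate, being nothing more than their conjunction.
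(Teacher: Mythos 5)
Your proposal is correct and matches the paper exactly: the theorem is obtained by combining Proposition~\ref{proposition:P-necessary} (for the forward implication) with Proposition~\ref{corollary:P-sufficient}\ref{sufficient:i} (for the converse), and your summaries of those two proofs accurately reflect the arguments given there.
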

Let us see explicitly  how the cases (i)-(iii) from the Introduction are reflected in these results. 

\begin{example}\label{example:levy-process}
$I=\mathbb{R}$ and $A\equiv 1=\frac{1}{\widehat{\delta_0}}$. Then $X=\xi$ is just a pk-spLp under the probabilities $(\PP_x)_{x\in \mathbb{R}}$, Laplace exponent $\psi-p$. For $q\in (0,\infty)$,  $\nu_q\propto \delta_{\psi^{-1}(p+q)}$ \cite[p.~232, 1st display]{kyprianou}. In this case $m_q=\infty\delta_{\psi^{-1}(p)}\notin \MM_I$ and the indicated $\nu_q$ is indeed the (up to a  multiplicative constant from $(0,\infty)$)   only solution to \eqref{eq:nu-q} within $\MM_I\backslash \{0\}$ carried by $[\psi^{-1}(p),\infty)$, as it should be.
\end{example}
In an informal sense we can interpret Theorem~\ref{theorem:time-changed-cm} as follows: the cm of $\frac{1}{A}$ is precisely what is required for the cm property at first passage  \eqref{eq:cm-1} [equiv. \eqref{cm-equiv}] to be retained when passing from a pk-spLp (being then true in a very simple sense, with Dirac $\nu_q$, $q\in (0,\infty)$) to  the process which we get by driving along the sample paths of the pk-spLp with velocity $A$.

\begin{example}\label{example:csbp}
 $I=(0,\infty)$ or $I=[0,\infty)$ according as to whether $\int^\infty_{\psi^{-1}(0)+1} \frac{1}{\psi}=\infty$  or $\int^\infty_{\psi^{-1}(0)+1} \frac{1}{\psi}<\infty$, and $A=\mathrm{id}_{(0,\infty)}=\frac{1}{\widehat{\mathscr{L}}\,\vert_{(0,\infty)}}$, where $\mathscr{L}$ is Lebesgue measure on $\mathcal{B}_{[0,\infty)}$. Then $X$ is a continuous-state branching process  under the probabilities $(\PP_x)_{x\in I}$, branching mechanism $\psi-p$; and by the  Lamperti transform continuous-state branching processes, whose paths are not a.s. nondecreasing, are actually exhausted (in law) by this construction  \cite[Section~12.1]{kyprianou}. For $q\in (0,\infty)$,  $\nu_q\propto \left(\mathbbm{1}_{(\psi^{-1}(p),\infty)}\frac{1}{\psi-p}\exp\left(\int^{\cdot}_\theta\frac{q}{\psi-p}\right)\right)\cdot \mathscr{L}$, where  $\theta\in (\psi^{-1}(p),\infty)$ is arbitrary \cite[Theorem~1]{ma}. Again  $m_q\notin \MM_I$ ($\therefore$  $\nu_q(\{\psi^{-1}(p)\})=0$) and  the indicated $\nu_q$ is  the only solution to \eqref{eq:nu-q} within $\MM_I\backslash \{0\}$ carried by $(\psi^{-1}(p),\infty)$: the former because of the at least linear decay of $\psi-p$ at $\psi^{-1}(p)+$; the latter because $(\psi-p)\cdot \nu_q=q\mathscr{L}\star\nu_q$, together with $\nu_q$ being carried by $(\psi^{-1}(p),\infty)$, implies that $\nu_q\ll\mathscr{L}$, so that writing $w_q:=\frac{\dd \nu_q}{\dd\mathscr{L}}$, we get $(\psi-p)w_q=q\int_0^\cdot w_q$ a.e.-$\mathscr{L}$, which is solved for in a straightforward manner.
\end{example}

Generalizing the preceding, if the representing measure $\gamma$ of Proposition~\ref{corollary:P-sufficient} is absolutely continuous (w.r.t. Lebesgue measure $\mathscr{L}$), then, for each $q\in (0,\infty)$,  by  \eqref{eq:nu-q} and the translation invariance of Lebesgue measure, $(\psi-p)\cdot \nu_q$ is absolutely continuous also. So, by Proposition~\ref{corollary:P-sufficient}\ref{sufficient:iii}, either (a) $m_q\in \MM_I$ and $\nu_q\propto m_q$, or else (b) $\nu_q$ is carried by $(\psi^{-1}(p),\infty)$, therefore is itself absolutely continuous. If further $\zeta:=\frac{\dd\gamma}{\dd \mathscr{L}}$ is bounded away from zero a.e.-$\mathscr{L}$ locally at $0+$, then automatically (a) is precluded and $w_q:=\frac{\dd \nu_q}{\dd\mathscr{L}}$ satisfies $$(\psi-p)w_q=q\zeta\star w_q\text{ a.e.-$\mathscr{L}$},$$
where $\star$ is now just the usual convolution of functions on $[0,\infty)$.

\begin{example}\label{example:logs-ppsmp}
Fix $\alpha\in (0,\infty)$. $I=\mathbb{R}$ and $A=e^{\alpha \cdot}=\frac{1}{\widehat{\delta_\alpha}}$. Then $\exp(-X)$ under the probabilities $\QQ_y:=\PP_{-\log y}$, $y\in (0,\infty)$, is the  positive self-similar Markov process of the spectrally negative type associated to $-\xi$ and the index $\alpha$ via the (another) Lamperti transform (we view $0$ as a cemetery state for $\exp(-X)$); and positive self-similar Markov processes with $0$ absorbing and no positive jumps of index $\alpha$, whose paths are not a.s. nonincreasing, are actually exhausted (in law) by this construction \cite[Section~13.3]{kyprianou}.  For $q\in (0,\infty)$,   $\nu_q\propto m_q=\sum_{k=0}^\infty\frac{q^k}{\prod_{l=1}^k\psi(\psi^{-1}(p)+l\alpha)-p}\delta_{\psi^{-1}(p)+\alpha k}$  \cite[Theorem~13.10(ii)]{kyprianou}. On this example, let us check that the condition that $\nu_q$ be carried by $[\psi^{-1}(p),\infty)$ in Proposition~\ref{corollary:P-sufficient}\ref{sufficient:i} cannot be suspended. Indeed, if for instance $\alpha>\psi^{-1}(0)>0=p$, then $m_q':=\sum_{k=0}^\infty \frac{q^k}{\prod_{l=1}^k\psi(l\alpha)}\delta_{\alpha k}$, like $m_q$, solves \eqref{eq:nu-q} for $\nu_q$ and belongs to $\MM_I\backslash \{0\}$, but clearly $m_q'\not\propto m_q$. (It can only mean that under the preceding provisos explosion occurs with positive probability, as it does \cite[Theorem~13.1(i)(2)]{kyprianou}.)
\end{example}
An easy extension of this last example is when the $\gamma$ of Proposition~\ref{corollary:P-sufficient} is carried by the lattice $\alpha\mathbb{N}$ for some $\alpha\in (0,\infty)$. For in such case, given any $q\in (0,\infty)$, the measure $\nu_q$ is carried by $\psi^{-1}(p)+\alpha\mathbb{N}_0$ and expresses as  (employing the notation $[k]:=\{1,\ldots,k\}$ for $k\in \mathbb{N}_0$) $$\nu_q\propto m_q=\sum_{k\in \mathbb{N}_0}\sum_{n\in \mathbb{N}^{[k]}}q^k \left(\prod_{i\in [k]}\frac{\gamma(\{\alpha n_i\})}{\psi(\psi^{-1}(p)+\alpha(n_1+\cdots+n_i))-p}\right)\delta_{\psi^{-1}(p)+\alpha \sum n}.$$% which is $\propto \nu_q$ iff $m_q\in \MM_I$ (it being certainly the case when the support of $\gamma$ is bounded).

As we have just beared witness to, the measures $\nu_q$, $q\in (0,\infty)$, are Dirac, absolutely continuous, discrete (even lattice, but not Dirac) in each of the respective instances (i)-(iii) from the Introduction. Proposition~\ref{corollary:P-sufficient}\ref{sufficient:iii} and the explicit form \eqref{eq:new-measures} show that $\nu_q$, $q\in (0,\infty)$, having both discrete and absolutely continuous, even singular components all at once are possible. 

For our final result still $I$ is fixed, but two processes of the same type as $X$ are considered. We assume that their cm scale functions \eqref{cm-equiv} agree up to multiplicative constants from $(0,\infty)$ and that for each of them $\inf\supp(\gamma)>0$, where $\gamma$ is the representing measure of $\frac{1}{A}$,  and are able to conclude --- leaning heavily on the explicit form  \eqref{eq:new-measures} --- that they have the same law. It is a (relatively weak, but nevertheless) complement to the characterization results of \cite{vidmar2021characterizations}.

\begin{proposition}\label{proposition:equality-in-law}
Let $\{\gamma^1,\gamma^2\}\subset \MM_{I^\circ}\backslash \{0\}$, $\{p^1,p^2\}\subset [0,\infty)$ and let $\psi^1$, $\psi^2$ be two Laplace exponents of spLp. For $i\in \{1,2\}$ let $X^i$ under the probabilities $(\PP_x^i)_{x\in I}$  be associated to $(\psi^i-p^i,A^i:=\frac{1}{\widehat{\gamma^i}})$ as $X$ under $(\PP_x)_{x\in I}$ is to $(\psi-p,A)$. The pairs $(\psi^i-p^i,A^i)$, $i\in \{1,2\}$, are of course assumed to satisfy the standing assumptions of Subsection~\ref{subsection:time-changed} in lieu of $(\psi-p,A)$ and we insist further that $\alpha^i:=\inf\supp(\gamma^i)>0$ for $i\in \{1,2\}$. Suppose the processes $X^1$ and $X^2$ have the same laws of their fptd, i.e., in the obvious notation, ${T_\lll^{1-}}_\star \PP_x^1={T_\lll^{2-}}_\star \PP_x^2$ for all $\lll\leq x$ from $I$. Then $X^1$ has the same law as $X^2$, that is to say $\PP^1_x=\PP^2_x$ for all $x\in I$.
\end{proposition}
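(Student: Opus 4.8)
The plan is to leverage the hypothesis $\alpha^i>0$ together with Proposition~\ref{corollary:P-sufficient}\ref{sufficient:iii}, which under that hypothesis guarantees $\nu_q^i\propto m_q^i$ with $m_q^i$ given by the fully explicit series \eqref{eq:new-measures}. Since the laws of the fptd coincide, so do the scale functions up to multiplicative constants: by \eqref{eq:scale-identity} we have $\Phi_q^1\propto\Phi_q^2$ on $I$ for every $q\in[0,\infty)$, hence (for $q>0$, using \eqref{cm-equiv} and the uniqueness up to a constant of the representing measure of a cm function) $\nu_q^1\propto\nu_q^2$, i.e. $m_q^1\propto m_q^2$ as measures on $\mathcal B_{[0,\infty)}$, for all $q\in(0,\infty)$. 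First I would extract from the $q=0$ case, where $\Phi_0^i\propto e^{-(\psi^i)^{-1}(p^i)\cdot}$, that $(\psi^1)^{-1}(p^1)=(\psi^2)^{-1}(p^2)=:c$; equivalently, from $m_q^i$ one reads off that the smallest atom of $m_q^i$ sits at $(\psi^i)^{-1}(p^i)$ with mass $1$, so $m_q^1\propto m_q^2$ already forces the proportionality constant to be $1$ and $c^1=c^2$. After the exponential tilting of Lemma~\ref{lemma:measure-change} (replacing $\psi^i$ by $(\psi^i)^\#$, which shifts $\gamma^i$ and $m_q^i$ by $c$ and leaves $A^i$ untouched), I may and will assume $(\psi^i)^{-1}(0)=0=p^i$, so that $m_q^i$ is the simplified series \eqref{eq:simplified}.

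The heart of the argument is then to recover both $\psi^i$ and $\gamma^i$ from the family $(m_q^i)_{q>0}$. Write $m_q^i=\delta_0+\sum_{k\ge1}q^k\mu_k^i$ where $\mu_k^i=\frac1{\psi^i}\cdot\big((\cdots(\frac1{\psi^i}\cdot\gamma^i)\star\gamma^i\cdots)\star\gamma^i\big)$ ($k$ factors $\gamma^i$). From $m_q^1=m_q^2$ for all $q>0$ (the constant being $1$ as just noted), matching powers of $q$ gives $\mu_k^1=\mu_k^2$ for every $k\in\mathbb N$. The $k=1$ identity reads $\frac1{\psi^1}\cdot\gamma^1=\frac1{\psi^2}\cdot\gamma^2=:\beta$, and the convolutional recursion \eqref{eq:convolutional-relation} (in its $\psi^{-1}(0)=0=p$ form $m_q=\delta_0+\frac q{\psi}\cdot(m_q\star\gamma)$) shows $\mu_{k+1}^i=\frac1{\psi^i}\cdot(\mu_k^i\star\gamma^i)$, equivalently $\psi^i\cdot\mu_{k+1}^i=\mu_k^i\star\gamma^i$. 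Taking $k=1$: $\psi^i\cdot\mu_2^i=\gamma^i\star\beta$ — wait, more usefully, take Laplace transforms. Setting $\widehat\beta$, $\widehat{\mu_k^i}$, $\widehat{\gamma^i}$ (all finite on $I^\circ$, and on $I$ after the $\alpha^i>0$ estimate of Proposition~\ref{corollary:P-sufficient}\ref{sufficient:i}'s proof), the relations become $\widehat{\mu_1^i}=\widehat\beta$ and, from $\psi^i\mu_{k+1}^i=\mu_k^i\star\gamma^i$, the ratio $\widehat{\gamma^i}(\theta)\big/\psi^i(\theta)$; but the cleaner route is: since $\gamma^i=\psi^i\cdot\beta$ (from $\frac1{\psi^i}\cdot\gamma^i=\beta$, i.e. $\gamma^i$ has density $\psi^i$ w.r.t. $\beta$) and $A^i=1/\widehat{\gamma^i}$, plug $\gamma^i=\psi^i\cdot\beta$ into the $k=2$ relation $\psi^i\cdot\mu_2^i=\mu_1^i\star\gamma^i=\beta\star(\psi^i\cdot\beta)$ and into the $\mu_2^i=\frac1{\psi^i}\cdot(\beta\star\gamma^i)$ shape; iterating, $\mu_k^i$ is determined by the single scalar function $\psi^i$ and the single measure $\beta$. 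So I would solve for $\psi^i$: from $\psi^1\cdot\mu_2^1=\psi^2\cdot\mu_2^2$ and $\mu_2^1=\mu_2^2=:\mu_2$ we get $(\psi^1-\psi^2)\cdot\mu_2=0$, and since $\mu_2=\frac1{\psi^i}\cdot(\beta\star\gamma^i)$ is not the zero measure and its support is all of $[2\alpha,\infty)$-ish (here $\alpha=\inf\supp\gamma^i$, common to both by matching supports), $\psi^1=\psi^2$ on that support; combined with analyticity of Laplace exponents on $(0,\infty)$ — $\psi^i$ extends to a convex real-analytic function on $(0,\infty)$ — equality on a set with an accumulation point forces $\psi^1=\psi^2$ on all of $[0,\infty)$. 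Then $\gamma^1=\psi^1\cdot\beta=\psi^2\cdot\beta=\gamma^2$, hence $A^1=A^2$, and also $p^1=p^2$ from $c^1=c^2$ after untilting. Since the law of $X^i$ is fully determined by the pair $(\psi^i-p^i,A^i)$ (as recalled in Subsection~\ref{subsection:time-changed}), we conclude $\PP^1_x=\PP^2_x$ for all $x\in I$.

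The main obstacle I anticipate is the clean separation of $\psi^i$ from $\gamma^i$: a priori the data $(m_q^i)_{q>0}$ only pins down the combination encoded in the $\mu_k^i$, and one must argue that the pair $(\psi^i,\gamma^i)$ is itself identifiable from these. The mechanism above — first get $\beta:=\mu_1^i$ common, then use the second-order term to get $(\psi^1-\psi^2)\cdot\mu_2=0$ with $\mu_2\ne0$ of full support, then invoke real-analyticity of Laplace exponents to propagate $\psi^1=\psi^2$ from $\supp\mu_2$ (an interval, or at least a set with a limit point) to all of $(0,\infty)$ — is the crux, and the support computation ($\supp(\beta\star\gamma^i)$, and the effect of dividing by $\psi^i$, which is strictly positive on $(0,\infty)$ hence harmless) must be done carefully, including the boundary behaviour near $\theta=0$ where $\psi^i$ may vanish. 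The rest — matching constants via the smallest atom, the tilting reduction, and reading off $\gamma^i$ and $p^i$ at the end — is bookkeeping.
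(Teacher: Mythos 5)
Your overall strategy is the paper's: reduce by the tilting of Lemma~\ref{lemma:measure-change} to $p^i=0=(\psi^i)^{-1}(0)$, use Proposition~\ref{corollary:P-sufficient}\ref{sufficient:iii} to get $m_q^1\propto m_q^2$, match powers of $q$ to equate the coefficient measures $\mu_k^1=\mu_k^2$, and the $k=1$ identity $\frac{1}{\psi^1}\cdot\gamma^1=\frac{1}{\psi^2}\cdot\gamma^2=:\beta$ is exactly the paper's \eqref{relation:two}. But the crux of your argument --- extracting $\psi^i$ from the higher coefficients --- has a genuine gap. You assert $\psi^1\cdot\mu_2^1=\psi^2\cdot\mu_2^2$ and deduce $(\psi^1-\psi^2)\cdot\mu_2=0$. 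The recursion only gives $\psi^i\cdot\mu_2^i=\beta\star\gamma^i=\beta\star(\psi^i\cdot\beta)$, and since $\gamma^1$ and $\gamma^2$ are not yet known to coincide, the two right-hand sides are different a priori; nothing cancels. In fact the conclusion $\psi^1=\psi^2$ is unreachable in principle: the rescaling $(\psi,\gamma)\mapsto(c\psi,c\gamma)$ leaves $\beta$, every $\mu_k$, every $m_q$, and indeed the law of $X$ invariant (speeding up $\xi$ while slowing the clock), yet changes $\psi$. Only $\psi^1\propto\psi^2$ can be true, and that is what the paper proves (noting, via $\reallywidehat{\psi^i\cdot\nu_q^i}=q\widehat{\nu_q^i}\widehat{\gamma^i}$, that proportionality suffices to conclude equality of laws).

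A second, independent failure point is your propagation step ``$\psi^1=\psi^2$ on $\supp\mu_2$ plus analyticity and an accumulation point.'' The support of $\mu_2=\frac{1}{\psi^i}\cdot(\beta\star\gamma^i)$ need not contain an accumulation point: in the self-similar case of Example~\ref{example:logs-ppsmp}, $\gamma=\delta_\alpha$ and $\mu_2$ is a point mass at $2\alpha$, so at best you would learn the value of $\psi$ at one point per coefficient. The paper's route is built precisely to survive this case: it performs a local asymptotic analysis of each coefficient measure on $[k\alpha,k\alpha+\epsilon]$ as $\epsilon\downarrow 0$ (using local Lipschitz continuity of $1/\psi^i$ on $(0,\infty)$), obtaining inductively $\psi^1(\alpha)/\psi^1(k\alpha)=\psi^2(\alpha)/\psi^2(k\alpha)$ for all $k$, i.e.\ $\psi^1\vert_{\alpha\mathbb{N}}\propto\psi^2\vert_{\alpha\mathbb{N}}$, and then invokes the nontrivial fact (\cite[Lemma~4.5]{vidmar2021characterizations}) that the Laplace exponent of a spLp not drifting to $\infty$ is determined by its values on a lattice $\alpha\mathbb{N}$ --- a lattice has no accumulation point in $(0,\infty)$, so plain analyticity does not do this job. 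You would need to supply both of these ingredients to repair the proof.
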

\begin{proof}
Equality of the laws of the fptd implies equality of their Laplace transforms, which in turn renders that the associated scale functions and therefore their representing measures agree up to   multiplicative constants from $(0,\infty)$. 

Since $\delta_{(\psi^1)^{-1}(p^1)}\propto \nu_0^1\propto \nu_0^2\propto \delta_{(\psi^2)^{-1}(p^2)}$, it must be that $(\psi^1)^{-1}(p^1)=(\psi^2)^{-1}(p^2)$. Therefore, as Laplace exponents of pk-spLp (being analytic on $\{\Re>0\}$ and continuous on $\{\Re\geq 0\}$) are determined by their values on a set with an accumulation point, in particular by their values on a (real) neighborhood of $\infty$, and taking into account also  Lemma~\ref{lemma:measure-change}, we see that we may, and we  do assume  $p^1=0=p^2$ and $(\psi^1)^{-1}(0)=0=(\psi^2)^{-1}(0)$.

On account of \eqref{scale-gen}, $\widehat{\psi ^i\cdot \nu_q^i}=q\widehat{\nu^i_q}\widehat{\gamma^i}$ on $I^\circ$ for $q\in (0,\infty)$ and $i\in \{1,2\}$; therefore it will suffice to establish that $\psi^1\propto \psi^2$.

By Proposition~\ref{corollary:P-sufficient}\ref{sufficient:iii}, for all $q\in (0,\infty)$, $m^1_q=m^2_q$; inspecting the $m_q^i$, $i\in \{1,2\}$,  as power series in $q\in (0,\infty)$, we  deduce that the measures $\gamma_k^i$, $i\in\{1,2\}$, at each of the $q^k$, $k\in \mathbb{N}$, in \eqref{eq:simplified} agree (at $k=0$ also, but trivially, just $\delta_0$ for both $X^1$ and $X^2$). Thus
\begin{align}
\frac{1}{\psi^1}\cdot \gamma^1&=\frac{1}{\psi^2}\cdot \gamma^2\label{relation:two}\\
\frac{1}{\psi^1}\cdot\left(\left(\frac{1}{\psi^1}\cdot \gamma^1\right)\star\gamma^1\right)&=\frac{1}{\psi^2}\cdot\left(\left(\frac{1}{\psi^2}\cdot \gamma^2\right)\star\gamma^2\right)\label{relation:one}\\
&\text{etc.}\nonumber
\end{align}
From \eqref{relation:two} and the fact that $\psi^1$ and $\psi^2$ are $(0,\infty)$-valued on $(0,\infty)$ we get that $$\alpha^1=\inf\supp\left(\frac{1}{\psi^1}\cdot \gamma^1\right)=\inf\supp\left(\frac{1}{\psi^2}\cdot \gamma^2\right)=\alpha^2$$ and we write just $\alpha$ for the common value;  evaluating  then \eqref{relation:two} at $[\alpha,\alpha+\epsilon]$ we have further that $$\frac{\gamma^1([\alpha,\alpha+\epsilon])}{\psi^1(\alpha)}\sim \frac{\gamma^2([\alpha,\alpha+\epsilon])}{\psi^2(\alpha)}\text{ as }\epsilon\downarrow 0,$$
where we have also used the local Lipschitz continuity of $\frac{1}{\psi^1}$ and $\frac{1}{\psi^2}$ on $(0,\infty)$. By the very same token we get from \eqref{relation:one}  that
$$\psi^1(\alpha)\left[\frac{1}{\psi^1}\cdot\left(\left(\frac{1}{\psi^1}\cdot \gamma^1\right)\star\left(\frac{1}{\psi^1}\cdot\gamma^1\right)\right)\right]([2\alpha,2\alpha+\epsilon])\sim\psi^2(\alpha)\left[\frac{1}{\psi^2}\cdot\left(\left(\frac{1}{\psi^2}\cdot \gamma^2\right)\star\left(\frac{1}{\psi^2}\cdot\gamma^2\right)\right)\right]([2\alpha,2\alpha+\epsilon])$$  as $\epsilon \downarrow 0$, and therefore, applying the same kind of argument yet again that $$\frac{\psi^1(\alpha)}{\psi^1(2\alpha)}=\frac{\psi^2(\alpha)}{\psi^2(2\alpha)}.$$ Continuing inductively in this manner it results that $$\frac{\psi^1(\alpha)}{\psi^1(k\alpha)}=\frac{\psi^2(\alpha)}{\psi^2(k\alpha)},\quad k \in \mathbb{N}_{\geq 2}.$$ In other words $\psi^1\vert_{\alpha\mathbb{N}}\propto \psi^2\vert_{\alpha\mathbb{N}}$. Since \cite[Lemma~4.5]{vidmar2021characterizations} the Laplace exponent of a spLp not drifting to infinity is determined by its values on  a given lattice $\alpha\mathbb{N}$, we deduce that $\psi^1\propto \psi^2$, which concludes the proof.
\end{proof}

\section{Conclusion}
%In Theorem~\ref{theorem:time-changed-cm} we have been able to  characterize cm of the scale functions associated to the fptd  of time-changed pk-spLp. 
In closing, let us briefly indicate some further   --- besides the extension to more general Markov processes and  the discrete space  complement that were already mentioned in the Introduction --- problems associated to, but left untreated in the above, and that may  be interesting for future study. One is the efficient numerical evaluation of the measures of Definition~\ref{definition:measures-mq} or indeed of their Laplace transforms. Another relates to Proposition~\ref{corollary:P-sufficient}: in \ref{sufficient:iii} thereof we identify $\nu_q$ explicitly (as $\propto m_q$) when $m_q\in \MM_I$. Can the $\nu_q$ be identitified in some relatively explicit way also when $m_q\notin \MM_I$ (at least under some reasonable sufficient condition that is given directly in terms of $(\psi-p,A)$)? The final  comes from the observation --- we do not make it completely precise --- that, at least judging by the self-similar \cite[Eq.~(3.1)]{vidmar_2020} and branching \cite[Eq.~(3.7)]{vidmar2021continuousstate} cases, some cm appears to be present also in the evaluation of the Laplace transforms of the lifetime $\zeta$ of $X$,  albeit in a less clear-cut way. Thus it may well be worth exploring whether here too it is the cm of $\frac{1}{A}$ which is, as it were, responsible for this phenomenon.

%\section{Discrete space}
%In this final section we would like to put down  the natural discrete-space analogoues of   the results procured above in the continuous-state space setting. None of which follows is surprising, and the discrete space setting is technically less involved. It is however not subsumed in the continuous platform, so deserved explicit separate treatment. We shall keep the presentation brief, however.
%
%We break with the notation adhered hereto, except that of Subsection~\ref{subsectino:general-notation}, and start anew. 
%
%
%Fix $I$, an interval of $\mathbb{Z}$, unbounded above.
%
% A $(0,\infty)$-valued sequence $\Theta$, defined on $I$,  shall be said to be cm if $\Theta(\lll+\cdot)\vert_{\mathbb{N}_0}$ is so \cite[Eq.~(VIII.3.1)]{feller} for all $\lll\in I$. When so, then by Haudorff's theorem \cite[Theorem~VIII.3.1]{feller} $\Theta$ is the moment sequence  of a unique (we will say, ``representing'') measure $\gamma$ on $(0,1]$: $\Theta(\theta)=\hat{\gamma}(\theta):=\int z^\theta\gamma(\dd z)$ for $\theta\in I$.

\bibliographystyle{plain}
\bibliography{Branching}

\end{document}